\newcommand{\Stirling}[2]{\genfrac{[}{]}{0pt}{0}{#1}{#2}}
\newtheorem{theorem}{Theorem}[section]
\newtheorem{lemma}[theorem]{Lemma}
\newtheorem{proposition}[theorem]{Proposition}
\theoremstyle{definition}
\theoremstyle{remark}
\newtheorem{remark}[theorem]{Remark}
\theoremstyle{example}
\theoremstyle{note}
\numberwithin{equation}{section}
\DeclareMathOperator{\GL}{GL}
\DeclareMathOperator{\Span}{Span}
\DeclareMathOperator{\M}{M}
\DeclareMathOperator{\tr}{Tr}
\DeclareMathOperator{\Rank}{Rank}
\DeclareMathOperator{\Res}{Res}
\begin{document}
\title{Dimension formula for the twisted Jacquet module of a cuspidal representation of $\GL(2n,\mathbb{F}_q)$}
\author{Kumar Balasubramanian}

\thanks{Research of Kumar Balasubramanian is supported by the SERB grant: CRG/2023/000281.}
\address{Kumar Balasubramanian\\
Department of Mathematics\\
IISER Bhopal\\
Bhopal, Madhya Pradesh 462066, India}
\email{bkumar@iiserb.ac.in}

\author{Himanshi Khurana}
\thanks{Research of Himanshi Khurana is supported by HRI postdoctoral fellowship.}
\address{Himanshi Khurana\\
Department of Mathematics\\
Harish-Chandra Research Institute\\
Prayagraj, Uttar Pradesh 211019 India}
\email{himanshikhurana@hri.res.in}

\keywords{Cuspidal representations, Twisted Jacquet module}
\subjclass{Primary: 20G40}

\maketitle

\begin{abstract} Let $F$ be a finite field and $G=\GL(2n,F)$. In this paper, we calculate the dimension of the twisted Jacquet module $\pi_{N,\psi_{A}}$ where $A\in \M(n,F)$ is a rank $k$ matrix and $\pi$ is an irreducible cuspidal representation of $G$.
\end{abstract}

\section{Introduction}
\thispagestyle{empty}
Let $F$ be a finite field and $G=\GL(2n,F)$. Let $P=MN$ be the standard parabolic subgroup of $G$ corresponding to the partition $(n,n)$. Then, we have that $M \simeq \GL(n,F) \times \GL(n,F)$ and $N \simeq \M(n,F)$. Let $(\pi,V)$ be an irreducible cuspidal representation of $G$. Let $\psi_0$ be a non-trivial additive character of $F$. It is easy to see that any character of $\M(n,F)$ is of the form $\psi_A$, where $\psi_A: \M(n,F) \to \mathbb{C}^{\times}$ is a character given by 
\[\psi_A(X)=\psi_0(\tr(AX)).\]
A character $\psi_A$ of $\M(n,F)$ is said to be non-degenerate if $A \in \GL(n,F)$, while it is said to be a degenerate character of rank $k$ if $\Rank(A)=k<n$. The group $\GL(n,F) \times \GL(n,F)$ acts on the set of characters of $\M(n,F)$, via
\[(g_1,g_2) \cdot \psi_A=\psi_{{g_2}^{-1}Ag_1}.\]
For $1 \leq k \leq n$, consider
\[A_k=\begin{bmatrix}I_k & 0 \\
0 & 0 \end{bmatrix} \in  \M(n,F),\]
where $I_k$ is the identity matrix in $\GL(k,F)$ and $I_0=0$. The characters $\psi_{A_k}, 0 \leq k \leq n$ form a set of representatives for the orbits uder the above action. The character $\psi_{A_n}$ is a representative for the orbit of non-degenerate characters, while the character $\psi_{A_k}, k < n$ is a representative for the orbit of degenerate characters of rank $k$.\\

One of the fundamental questions in representation theory is to understand the decomposition of a representation into its irreducible subrepresentations and the multiplicity with which these components occur. Consider the restriction of the representation $\pi$ to $N$, $\Res_{N}^{G} \pi$. In this paper, we give an explicit formula for the multiplicities with which any character occurs inside $\Res_N^G\pi$. Since $N$ is abelian, 
\[\Res_N^G \pi= \bigoplus_{A \in \M(n,F)} m_{\psi_A}\psi_A,\]
where $m_{\psi_A}$ is the multiplicity with which $\psi_A$ occurs in $\Res_N^G \pi.$ The sum of all irreducible representations of $N$ inside $\pi$, on which $N$ operates by $\psi_{A}$ is called the twisted Jacquet module of $\pi$ corresponding to the character $\psi_{A}$, and is denoted by $\pi_{N,\psi_{A}}$. In other words, $\pi_{N,\psi_A}$ is the $\psi_A$-isotypic component of the representation $\Res_N^G \pi$. Then we have that \[\dim_{\mathbb{C}}({\pi_{N,\psi_A}})=m_{\psi_A}\dim_{\mathbb{C}}(\psi_A).\]
From Theorem 3.9 in \cite{KumHim[3]}, it follows that for any two matrices $A,B \in \M(n,F)$ of equal ranks, we have 
\[\dim_{\mathbb{C}}(\pi_{N,\psi_A})=\dim_{\mathbb{C}}(\pi_{N,\psi_B}).\]
Thus, we can deduce that
\[\Res_N^G \pi= \bigoplus_{k=0}^{n} \bigoplus_{A \in \M(n,k,F)}m_{\psi_{A_k}} \psi_{A_k}.\]
 Since $\pi$ is cuspidal, the Jacquet module $\pi_N=\pi_{N,1}=0$. Thus, $m_{\psi_{A_0}}=0$. In \cite{Dip[1]}, Prasad computed the multiplicity of any non-degenerate character inside $\Res_N^G\pi$ by computing the dimension of the space $\pi_{N,\psi_{A_n}}$. In an earlier work of ours, we computed the multiplicity of a degenerate character of rank $1$ inside $\Res_N^G\pi$ by computing the dimension of $\pi_{N,\psi_{A_1}}$. See Theorem 3.7 in \cite{KumHim[3]} for more details. In \cite{KumHimgl6rk2}, we also computed the dimension of $\pi_{N,\psi_{A_2}}$ for $n=3$, and hence the multiplicity with which any degenerate character of rank $2$ occurs inside $\Res_N^G \pi$, when $\pi$ is a cuspidal representation of $\GL(6,F)$. See Theorem 3.1 in \cite{KumHimgl6rk2} for more details.\\
 
 In this paper, we give an explicit formula to describe the multiplicity of any degenerate character $\psi_{A_k}$ of rank $k, 1 < k < n$ by computing the dimension of the module $\pi_{N,\psi_{A_k}}$.

\section{Preliminaries}
In this section, we record some preliminaries that we need.

\subsection{Character of a cuspidal representation} Let $F$ be the finite field of order $q$ and $G=\GL(m,F)$. Let $F_m$ be the unique field extension of $F$ of degree $m$. A character $\theta$ of $F^{\times}_{m}$ is called a ``regular'' character, if under the action of the Galois group of $F_{m}$ over $F$, $\theta$ gives rise to $m$ distinct characters of $F^{\times}_{m}$. It is a well known fact that the cuspidal representations of $\GL(m,F)$ are parametrized by the regular characters of $F_{m}^{\times}$. To avoid introducing more notation, we mention below only the relevant statements on computing the character values that we have used. We refer the reader to Section 6 in \cite{Gel[1]} for more precise statements on computing character values.

\begin{theorem}\label{character value of cuspidal representation (Gelfand)}
Let $\theta$ be a regular character of $F^{\times}_{m}$. Let $\pi=\pi_{\theta}$ be an irreducible cuspidal representation of $\GL(m,F)$ associated to $\theta$. Let $\Theta_{\pi}$ be its character. If $g\in \GL(m,F)$ is such that the characteristic polynomial of $g$ is not a power of a polynomial irreducible over $F$. Then, we have  \[\Theta_\pi(g)=0. \]
\end{theorem}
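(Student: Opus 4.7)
The plan is to conjugate $g$ into a proper Levi subgroup and then exploit cuspidality through the vanishing of the ordinary Jacquet module. Since the characteristic polynomial of $g$ is not a power of a single irreducible polynomial, I can write it as a product $f_1 f_2$ of two coprime non-constant polynomials. The primary decomposition then yields a $g$-stable splitting $F^m = V_1 \oplus V_2$ with $g|_{V_i}$ of characteristic polynomial $f_i$ and of degree $m_i := \dim V_i$. Using a basis adapted to this splitting together with the class-function property of $\Theta_\pi$, I may replace $g$ by its conjugate $\operatorname{diag}(g_1, g_2)$ in the Levi $M := \GL(m_1, F) \times \GL(m_2, F)$ of the standard parabolic $P = MN$ of $\GL(m,F)$ attached to the composition $(m_1, m_2)$.

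The key computation is that the $N$-conjugation action sweeps $g$ across the entire coset $gN$. Parametrising $N$ by $n(X) := \begin{pmatrix} I_{m_1} & X \\ 0 & I_{m_2} \end{pmatrix}$, where $X$ ranges over the space $\M(m_1 \times m_2, F)$ of $m_1 \times m_2$ matrices over $F$, a direct matrix multiplication gives
\[
n(X)\,g\,n(X)^{-1} \;=\; g\cdot n\bigl(g_1^{-1}Xg_2 - X\bigr).
\]
The $F$-linear map $\Phi \colon X \mapsto g_1^{-1}Xg_2 - X$ has kernel the matrices satisfying $g_1 X = X g_2$. Since $g_1$ and $g_2$ have coprime characteristic polynomials, their spectra in $\overline{F}$ are disjoint and such intertwiners must vanish; thus $\Phi$ is a bijection and so is $n \mapsto n g n^{-1}$ as a map from $N$ onto the full coset $gN$.

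Summing $\Theta_\pi$ over the $N$-orbit of $g$ in two different ways and invoking cuspidality then finishes the argument:
\[
|N|\,\Theta_\pi(g) \;=\; \sum_{n\in N}\Theta_\pi(n g n^{-1}) \;=\; \sum_{n'\in N}\Theta_\pi(g n') \;=\; |N|\,\Theta_{\pi_N}(g),
\]
where the first equality is the class-function property, the middle one is the bijection established above, and the last recognises the sum as $|N|$ times the trace of $\pi(g)$ on the $N$-invariants of $V$. Because $\pi$ is cuspidal, $\pi_N = 0$ and hence $\Theta_\pi(g) = 0$. The only step with any real content is the invertibility of $\Phi$; everything else is a formal consequence of cuspidality and the fact that $\Theta_\pi$ is a class function.
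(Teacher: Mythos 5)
Your proof is correct, but it takes a genuinely different route from the paper, which in fact offers no proof of this statement at all: the theorem is quoted from Section 6 of Gelfand's work, where the vanishing is extracted from the explicit character theory of cuspidal representations of $\GL(m,F)$ --- the same machinery that yields the explicit values of Theorem~\ref{character value of cuspidal representation (Dipendra)}. Your argument is self-contained and soft: the coprime factorization of the characteristic polynomial and primary decomposition put $g$ (up to conjugacy, harmless since $\Theta_\pi$ is a class function) in block-diagonal form $\mathrm{diag}(g_1,g_2)$ inside a proper Levi; the identity $n(X)\,g\,n(X)^{-1}=g\,n\bigl(g_1^{-1}Xg_2-X\bigr)$ together with the invertibility of $X\mapsto g_1^{-1}Xg_2-X$ (the Sylvester-equation step, which is exactly where coprimality of the characteristic polynomials of $g_1$ and $g_2$ enters) shows that the $N$-conjugates of $g$ sweep out the coset $gN$ bijectively; and averaging the class function then identifies $|N|\,\Theta_\pi(g)$ with $\tr\bigl(\pi(g)\sum_{n\in N}\pi(n)\bigr)$, i.e.\ with $|N|$ times the trace of $\pi(g)$ on the $N$-invariants $V^N$ (note $\pi(g)$ does preserve $V^N$, since $g$ lies in the Levi and hence normalizes $N$ --- worth stating explicitly), which vanishes by cuspidality. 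As for what each approach buys: the paper's citation is the economical choice in context, because Gelfand's explicit character values are indispensable anyway --- Theorem~\ref{character value of cuspidal representation (Dipendra)} is the actual computational input of Section 3, and a vanishing statement alone would not suffice; your proof, on the other hand, requires no classification of cuspidal representations, exposes the vanishing as a formal consequence of cuspidality plus elementary linear algebra, and adapts verbatim to unipotent radicals of other parabolics and to more general finite reductive groups, but it cannot produce the nonzero character values on elements whose characteristic polynomial is a power of an irreducible.
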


\begin{theorem}\label{character value of cuspidal representation (Dipendra)} Let $\theta$ be a regular character of $F^{\times}_{m}$. Let $\i=\pi_{\theta}$ be an irreducible cuspidal representation of $\GL(m,F)$ associated to $\theta$. Let $\Theta_{\pi}$ be its character. Suppose that $g=s.u$ is the Jordan decomposition of an element $g$ in $\GL(m,F)$. If $\Theta_{\pi}(g)\neq 0$, then the semisimple element $s$ must come from $F_{m}^{\times}$. Suppose that $s$ comes from $F_{m}^{\times}$. Let $z$ be an eigenvalue of $s$ in $F_{m}$ and let $t$ be the dimension of the kernel of $g-z$ over $F_{m}$. Then
\[\Theta_{\pi}(g)=(-1)^{m-1}\bigg[\sum_{\alpha=0}^{d-1}\theta(z^{q^{\alpha}})\bigg ](1-q^{d})(1-(q^{d})^{2})\dots (1-(q^{d})^{t-1}). \]
where $q^{d}$ is the cardinality of the field generated by $z$ over $F$, and the summation is over the distinct Galois conjugates of $z$.
\end{theorem}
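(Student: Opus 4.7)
The plan is to realize the cuspidal representation $\pi_\theta$ via the Deligne-Lusztig construction: since $\theta$ is a regular character of the anisotropic maximal torus $T \simeq F_m^\times$ of $G = \GL(m,F)$, the virtual representation $\varepsilon_G \varepsilon_T R_T^\theta$ is, up to sign, the irreducible cuspidal representation $\pi_\theta$. The overall sign $(-1)^{m-1}$ in the formula reflects precisely the product $\varepsilon_G \varepsilon_T$ in this case, since $T$ is a Coxeter torus of $G$.

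First, I would invoke the Deligne-Lusztig character formula. For $g$ with Jordan decomposition $g = s.u$,
\[
\Theta_{R_T^\theta}(g) \;=\; \frac{1}{|C_G(s)^F|}\sum_{\substack{x \in G^F \\ x^{-1}sx \in T}} \theta(x^{-1}sx)\, Q_{x^{-1}Tx}^{C_G(s)}(u),
\]
where $Q$ denotes the appropriate Green function. Since $T$ is the anisotropic torus coming from the extension $F_m/F$, its $F$-points are exactly the image of $F_m^\times$ in $G$. Hence if $s$ is not conjugate to an element coming from $F_m^\times$, the sum is empty and $\Theta_\pi(g) = 0$. This establishes the first assertion, and in particular recovers Theorem~\ref{character value of cuspidal representation (Gelfand)} as the special case in which the characteristic polynomial of $g$ fails to be a power of an irreducible polynomial.

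Second, suppose $s$ comes from $F_m^\times$ with eigenvalue $z$ generating a subfield $F_d \subseteq F_m$. Then $C_G(s) \simeq \GL(t, F_d)$ where $t = \dim_{F_m}\ker(g-z)$, and $T$ becomes the Coxeter torus inside this centralizer. Averaging $\theta$ over the conjugates of $s$ sitting in $T$ produces the Galois sum $\sum_{\alpha = 0}^{d-1} \theta(z^{q^\alpha})$, one term for each element of the orbit of $z$ under $\Gal(F_m/F)$. What remains is to evaluate $Q_T^{\GL(t,F_d)}(u)$ on a unipotent element $u$ whose Jordan form over $\overline{F}$ is determined by the given dimension $t$; one expects this Green value to equal the product $(1-q^d)(1-(q^d)^2)\cdots(1-(q^d)^{t-1})$.

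The main obstacle is the computation of the Green function $Q_T^{\GL(t,F_d)}(u)$ for the Coxeter torus at the appropriate unipotent class. One route is Lusztig's general formula for Green functions in terms of symmetric group characters and fake degrees, which for the Coxeter torus simplifies considerably. A more self-contained route is Green's original derivation via Hall-Littlewood symmetric functions, which gives this precise product directly. Throughout, one must be careful to track the two signs $\varepsilon_G$, $\varepsilon_T$, the normalizing factor $|C_G(s)^F|^{-1}$, and the counting of conjugates $x$ modulo the torus normalizer, so that the final expression assembles into the stated formula with leading sign $(-1)^{m-1}$.
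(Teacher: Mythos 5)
The paper itself offers no proof of this statement: it is quoted verbatim from Prasad (Theorem~2 of \cite{Dip[1]}), whose argument is in essence the Deligne--Lusztig/Green-function computation you outline. So your overall strategy---realize $\pi_\theta$ as $\varepsilon_G\varepsilon_T R_T^\theta$ for the anisotropic torus $T\simeq F_m^{\times}$, observe $\varepsilon_G\varepsilon_T=(-1)^{m-1}$, use the character formula to get vanishing when $s$ is not conjugate into $F_m^{\times}$, and collapse the sum over conjugating elements into the Galois sum $\sum_{\alpha}\theta(z^{q^{\alpha}})$---is the right one, and your observation that the vanishing statement recovers Theorem~\ref{character value of cuspidal representation (Gelfand)} is correct, since the characteristic polynomial of $g=su$ equals that of $s$, which is a power of an irreducible whenever $s$ comes from $F_m^{\times}$.

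However, there are two concrete errors. First, the centralizer is misidentified: if $z$ generates $F_d$ over $F$, then $C_G(s)\simeq \GL(m/d,F_d)$, not $\GL(t,F_d)$; your identification holds only when $u=1$. The quantity $t=\dim_{F_m}\ker(g-z)$ is the dimension of the fixed space of $u$ acting on the $z$-eigenspace of $s$, i.e.\ the \emph{number of Jordan blocks} of $u$ viewed as a unipotent element of $\GL(m/d,F_d)$. Consequently your further claim that the Jordan form of $u$ is ``determined by the given dimension $t$'' is also false: $t$ records only the number of blocks, and unipotent classes with the same $t$ but different block sizes all occur. This is not a cosmetic slip, because the crux of the theorem is precisely the nontrivial fact that the Coxeter-torus Green function $Q_T^{\GL(m/d,F_d)}(u)=(1-q^{d})(1-q^{2d})\cdots(1-q^{d(t-1)})$ depends only on the number of blocks $t$ and not on the full Jordan type; in your setup this fact is accidentally built into the incorrect notation rather than proved. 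That Green-function evaluation---which you defer with ``one expects'' and a pointer to Lusztig or Hall--Littlewood theory---is the entire technical content of the statement, so as written the proposal is a correct skeleton with a wrong intermediate assertion, not a proof; to complete it you must either cite Green's explicit computation (as Prasad does) or actually carry out the evaluation for the Coxeter torus and show its independence of the block sizes.
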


See Theorem 2 in \cite{Dip[1]} for this version.

\subsection{Twisted Jacquet module}

In this section, we recall the character and the dimension formula of the twisted Jacquet module of a representation $\pi$. \\

Let $G=\GL(k,F)$ and $P=MN$ be a parabolic subgroup of $G$. Let $\psi$ be a character of $N$. For $m\in M$, let $\psi^{m}$ be the character of $N$ defined by $\psi^{m}(n)=\psi(mnm^{-1})$. Let \[V(N,\psi)= \Span_{\mathbb{C}} \{\pi(n)v-\psi(n)v \mid n \in N, v \in V \} \]
and \[M_{\psi} = \{m \in M \mid {\psi}^{m}(n)=\psi(n) , \forall n \in N \} . \]
Clearly, $M_{\psi}$ is a subgroup of $M$ and it is easy to see that $V(N,\psi)$ is an $M_{\psi}$-invariant subspace of $V$. Hence, we get a representation  $(\pi_{N,\psi},V/V(N,\psi))$ of $M_{\psi}$. We call $(\pi_{N,\psi}, V/V(N,\psi))$ the twisted Jacquet module of $\pi$ with respect to $\psi$. We write $\Theta_{N, \psi}$ for the character of $\pi_{N, \psi}$.

\begin{proposition} \label{Character of twisted Jacquet module}
Let $(\pi,V)$ be a representation of $\GL(k,F)$  and $\Theta_\pi$ be the character of $\pi$. We have
\[\Theta_{N,\psi}(m) = \frac{1}{|N|}\sum_{n \in N}\Theta_{\pi}(mn)\overline{\psi(n)}.\]
\end{proposition}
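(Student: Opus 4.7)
The plan is to realize the twisted Jacquet module $V/V(N,\psi)$ concretely as an $M_\psi$-invariant subspace of $V$, cut out by an explicit projection, and then read off its character as the trace of $\pi(m)$ composed with that projection.

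The first step is to introduce the averaging operator
\[P = \frac{1}{|N|}\sum_{n\in N}\overline{\psi(n)}\,\pi(n)\in \End(V).\]
Using the fact that $\psi$ is a character of $N$, a one-line change of variables $n\mapsto nn_0$ shows that $\pi(n_0)P = \psi(n_0)P$ for every $n_0\in N$, and hence $P^2=P$. So $P$ is an idempotent projecting onto the $\psi$-isotypic subspace $V_\psi := \Image(P) = \{v\in V : \pi(n)v=\psi(n)v \text{ for all } n\in N\}$.

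The second step is to identify the kernel of $P$ with $V(N,\psi)$. For any generator $\pi(n)v - \psi(n)v$ of $V(N,\psi)$, applying $P$ and using $\pi(n)P=\psi(n)P$ gives $0$, so $V(N,\psi)\subseteq \Ker(P)$. Conversely, for any $v\in V$ one computes
\[v - Pv = -\frac{1}{|N|}\sum_{n\in N}\overline{\psi(n)}\bigl(\pi(n)v - \psi(n)v\bigr) \in V(N,\psi),\]
so if $Pv=0$ then $v\in V(N,\psi)$. Hence $V = V_\psi \oplus V(N,\psi)$, and both summands are $M_\psi$-stable because $M_\psi$ preserves $\psi$ by definition. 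The composition $V_\psi \hookrightarrow V \twoheadrightarrow V/V(N,\psi)$ is therefore an $M_\psi$-equivariant isomorphism.

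The final step is purely computational. For $m\in M_\psi$, the previous paragraph identifies $\pi_{N,\psi}(m)$ with $\pi(m)|_{V_\psi}$, whose trace equals $\tr(\pi(m)P)$ on all of $V$. Expanding,
\[\Theta_{N,\psi}(m) = \tr\bigl(\pi(m)P\bigr) = \frac{1}{|N|}\sum_{n\in N}\overline{\psi(n)}\,\tr(\pi(m)\pi(n)) = \frac{1}{|N|}\sum_{n\in N}\overline{\psi(n)}\,\Theta_\pi(mn),\]
which is the claimed formula. There is no serious obstacle here; the only point requiring any care is the verification that $\Ker(P)$ is exactly (not merely contains) $V(N,\psi)$, since it is this equality that lets the projection $P$ descend to an isomorphism onto the quotient and makes the trace identification valid.
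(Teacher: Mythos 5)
Your proof is correct, and in fact the paper states this proposition without any proof at all (it is treated as a standard fact about twisted Jacquet modules), so your argument fills in exactly what is omitted. The route you take --- the averaging idempotent $P = \frac{1}{|N|}\sum_{n\in N}\overline{\psi(n)}\,\pi(n)$, the identification $\Ker(P)=V(N,\psi)$ giving the $M_\psi$-equivariant splitting $V = V_\psi \oplus V(N,\psi)$, and then $\Theta_{N,\psi}(m)=\tr(\pi(m)P)$ --- is the standard proof of this formula, and every step, including the one you flag as delicate (that $\Ker(P)$ is exactly $V(N,\psi)$, via $v-Pv\in V(N,\psi)$), is carried out correctly.
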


\begin{remark}
Taking $m=1$, we get the dimension of $\pi_{N, \psi}$. To be precise, we have
\[\dim_{\mathbb{C}}(\pi_{N,\psi}) =  \frac{1}{|N|}\sum_{n \in N}\Theta_{\pi}(n)\overline{\psi(n)}.\]
\end{remark}

Let $n$ be a positive integer. Let $\M(n,F)$ be the set of $n \times n$ matrices over the finite field $F$. For an integer $0 \leq r \leq n$, we denote $\M(n,r,F)$ to be the subset of $\M(n,F)$ consisting of matrices of rank $r$. Denote the cardinality of $\M(n,r,F)$ by $a(n,r,q)$. For $0 \leq k \leq n$ and $\alpha \in F$, let 
\[Y_{n,r,k}^{\alpha}=\{X \in \M(n,r,F) \mid \tr(A_kX)=\alpha \}.\]
Denote the cardinality of $Y_{n,r,k}^{\alpha}$ by $f_{n,r,k}^{\alpha}$. The difference of the cardinalities $f_{n,r,k}^{0}-f_{n,r,k}^{1}$ is recorded in the theorem below. We refer the reader to Theorem 3.7 in \cite{KumKriHim} for more details.  

\begin{theorem}
For $0 \leq k,r \leq n$, we have
\[f_{n,r,k}^0-f_{n,r,k}^1=g_{n,r,k} = \sum_{i=0}^r 
(-1)^i q^{\tbinom{i}{2} +k(r-i)} \,\Stirling{k}{i}_q \,  a(n-k,r-i,q).
\]
\end{theorem}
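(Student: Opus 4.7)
The plan is to reduce the cardinality difference to a single character sum over rank-$r$ matrices, then evaluate that sum by decomposing on the rank of the lower-right block and performing a Schur-type elimination.

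First, the substitution $X\mapsto cX$ for $c\in F^{\times}$ preserves rank and sends $\tr(A_k X)$ to $c\tr(A_k X)$, so $f_{n,r,k}^{\alpha}=f_{n,r,k}^{1}$ for every $\alpha\in F^{\times}$. Combined with $\sum_{\alpha}f_{n,r,k}^{\alpha}=a(n,r,q)$ and additive character orthogonality, this yields
\[
f_{n,r,k}^{0}-f_{n,r,k}^{1}\;=\;S(n,r,k)\;:=\;\sum_{X\in\M(n,r,F)}\psi_{0}\bigl(\tr(A_k X)\bigr)
\]
for any fixed nontrivial additive character $\psi_{0}$ of $F$.

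Writing $X=\begin{bmatrix}B&C\\ D&E\end{bmatrix}$ in blocks with $B$ of size $k\times k$ (so $\tr(A_k X)=\tr B$), I would next partition by $\Rank E=t$ and use the $\GL\times\GL$-action to normalise $E$ to $\begin{bmatrix}I_t&0\\0&0\end{bmatrix}$, contributing the factor $a(n-k,t,q)$. The invertible block $I_t$ then permits Schur-complement row and column operations that eliminate the parts $C_1,D_1$ of $C,D$ meeting it, reducing the rank constraint to $\Rank\tilde X=r-t$, where $\tilde X=\begin{bmatrix}B'&C_2\\ D_2&0\end{bmatrix}$ and $B'=B-C_1D_1$. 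Crucially $\tr B=\tr B'+\tr(C_1D_1)$, and a direct character-orthogonality computation gives $\sum_{C_1,D_1}\psi_0(\tr(C_1D_1))=q^{tk}$, so the $C_1,D_1$ variables separate cleanly from the rest.

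This reduces the problem to computing, for $p=k$, $m=n-k-t$, $s=r-t$, the sum
\[
\tilde S(p,m,s)\;:=\;\sum_{(B',C_2,D_2):\,\Rank\tilde X=s}\psi_0(\tr B').
\]
A Möbius inversion on the Grassmannian of $F^{p+m}$, based on the identity $\sum_{\Image X\subseteq V}\psi_0(\tr B')=q^{p\dim V}[V\subseteq W_0]$ (where $W_0\subseteq F^{p+m}$ is the coordinate subspace on which the first $p$ coordinates vanish), yields $\tilde S(p,m,s)=(-1)^{s}q^{\binom{s}{2}}\stirling{p}{s}_q$, independent of $m$. Substituting $i=r-t$ then brings $S(n,r,k)$ into exactly the claimed form.

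The main obstacle is collapsing $\tilde S$ to its closed form: the Möbius step produces a sum over $d=0,\dots,s$ that must simplify via the $q$-analogue of the Bernstein partition-of-unity identity
\[
\sum_{d=0}^{m}\stirling{m}{d}_q\,y^{d}\prod_{i=0}^{m-d-1}(1-yq^{i})=1,
\]
itself provable by induction on $m$ using the $q$-Pascal rule. Careful bookkeeping of signs, $q$-binomial exponents, and the resulting telescoping is the delicate piece; the remainder of the argument is a routine combination of scaling symmetry with a Schur-complement reduction.
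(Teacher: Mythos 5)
You should first be aware that this paper contains no proof of the statement at all: it is imported by citation from Theorem 3.7 of \cite{KumKriHim}, so there is no internal argument to compare yours against, and your proposal must stand as a free-standing proof. Its skeleton is correct, and I could verify every claim you actually state: the scaling-plus-orthogonality step giving $f_{n,r,k}^{0}-f_{n,r,k}^{1}=\sum_{X\in\M(n,r,F)}\psi_0(\tr(A_kX))$; the fibering over $t=\Rank E$, where conjugating by $\mathrm{diag}(I_k,g_1)$ and $\mathrm{diag}(I_k,g_2)$ fixes the block $B$ and the rank and produces the factor $a(n-k,t,q)$; the Schur elimination by unipotent row and column operations, which gives $\Rank X=t+\Rank\tilde X$, $\tr B=\tr B'+\tr(C_1D_1)$, and the clean factor $\sum_{C_1,D_1}\psi_0(\tr(C_1D_1))=q^{kt}$; and your Grassmannian identity $\sum_{\Image \tilde X\subseteq V}\psi_0(\tr B')=q^{p\dim V}[V\subseteq W_0]$, which holds because when $V\subseteq W_0$ the last $m$ columns of $\tilde X$ are forced to vanish while the first $p$ columns contribute $|V|^p$, and otherwise one of the first $p$ coordinate functionals is nontrivial on $V$ and kills the column sum. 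Granting $\tilde S(p,m,s)=(-1)^sq^{\binom{s}{2}}\Stirling{p}{s}_q$, the substitution $i=r-t$ does return exactly the stated formula.

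The genuine gap is the step you dismiss as ``careful bookkeeping'': that evaluation of $\tilde S$ is the entire analytic content of the theorem, and you never perform it. Carrying out the M\"obius inversion and counting pairs $V'\subseteq V$ with $V'\subseteq W_0$, $\dim V'=d$, $\dim V=s$ gives
\[
\tilde S(p,m,s)\;=\;\sum_{d=0}^{s}(-1)^{s-d}q^{\binom{s-d}{2}+pd}\,\Stirling{m}{d}_q\,\Stirling{p+m-d}{s-d}_q,
\]
and the assertion that this collapses to $(-1)^sq^{\binom{s}{2}}\Stirling{p}{s}_q$, independent of $m$, is a nontrivial two-parameter $q$-identity. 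The partition-of-unity identity you invoke has a single free parameter, and you give no derivation of the needed identity from it; as written, the crux is asserted rather than proven. The identity is in fact true, and the efficient way to close the gap is the $q$-Chu--Vandermonde identity which this paper itself records as Eq.~(2.2): rewriting the Gaussian binomials as $q$-Pochhammer ratios turns the sum above into
\[
(-1)^sq^{\binom{s}{2}}\,\Stirling{p+m}{s}_q\sum_{d=0}^{s}\frac{(q^{-m};q)_d\,(q^{-s};q)_d}{(q^{-(p+m)};q)_d\,(q;q)_d}\,q^{d},
\]
and Chu--Vandermonde (in its companion form, obtained from Eq.~(2.2) by reversing the order of summation, with $b=q^{-m}$, $c=q^{-(p+m)}$, $i=s$) evaluates the inner sum to $q^{-ms}(q^{-p};q)_s/(q^{-(p+m)};q)_s=\Stirling{p}{s}_q\big/\Stirling{p+m}{s}_q$, which is exactly the claim. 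With that computation supplied, your argument is a complete and self-contained proof of a statement the paper only cites.
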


Using change of variable $r-i=s$, the sum can be written as 
\[\sum_{i=0}^r 
(-1)^i q^{\tbinom{i}{2} +k(r-i)} \,\Stirling{k}{i}_q \,  a(n-k,r-i,q)=\sum_{s=0}^{r}(-1)^{r-s}q^{\tbinom{r-s}{2} +ks} \,\Stirling{k}{r-s}_q \,  a(n-k,s,q).\]
It is clear that $\Stirling{k}{r-s}_q=0$ when $r-s>k$. Therefore, it follows that

\begin{equation} \label{f(n,r,k)^0-f(n,r,k)^1}
f_{n,r,k}^0-f_{n,r,k}^1=\sum_{s=r-k}^{r}(-1)^{r-s}q^{\tbinom{r-s}{2} +ks} \,\Stirling{k}{r-s}_q \,  a(n-k,s,q)
\end{equation}

\noindent Before we proceed, we recall few identities that we use in this paper. Let $(x;q)_{n}$ be the $q$-Pochhammer symbol defined by 
\[(x;q)_{n}= \prod_{i=0}^{n-1}(1-xq^{i}).\] 
\begin{lemma} \label{identity for aq^k}
For any non-zero $a \in \mathbb{C}$, we have
\[(aq^k;q)_{n-k}=\frac{(a;q)_n}{(a;q)_k}.\]
\end{lemma}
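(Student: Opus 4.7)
The plan is to prove this identity by directly unpacking the definition of the $q$-Pochhammer symbol on both sides and reindexing one of the resulting products; no combinatorial or generating-function machinery is needed.

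First, I would write the left-hand side out explicitly:
\[
(aq^k;q)_{n-k} \;=\; \prod_{i=0}^{n-k-1}\bigl(1-(aq^k)q^i\bigr) \;=\; \prod_{i=0}^{n-k-1}\bigl(1-aq^{k+i}\bigr).
\]
Substituting $j=k+i$ turns this into $\prod_{j=k}^{n-1}(1-aq^j)$.

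Next, I would expand the right-hand side. By definition,
\[
(a;q)_n \;=\; \prod_{j=0}^{n-1}(1-aq^j), \qquad (a;q)_k \;=\; \prod_{j=0}^{k-1}(1-aq^j),
\]
and since every factor of the denominator (assuming $0 \leq k \leq n$) appears in the numerator, the quotient telescopes to
\[
\frac{(a;q)_n}{(a;q)_k} \;=\; \prod_{j=k}^{n-1}(1-aq^j),
\]
which matches the reindexed form of the left-hand side. One should note that this manipulation requires $(a;q)_k \neq 0$, i.e.\ $aq^i \neq 1$ for $0 \leq i \leq k-1$; in the applications of this lemma later in the paper, $a$ will be chosen so that this holds automatically (e.g.\ $a$ a positive power of $q$). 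The edge cases $k=0$ (both sides equal $(a;q)_n$, with the empty product convention for the denominator) and $k=n$ (both sides equal $1$) also check out immediately from the conventions.

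There really is no main obstacle here — the statement is essentially a bookkeeping identity about ranges of products, and the only thing to be careful about is ensuring the index substitution $j = k+i$ is carried out cleanly and that the hypothesis ``$a$ nonzero'' (together with implicit non-vanishing of $(a;q)_k$) is used only to justify dividing. I would present the proof in two short displays as above and conclude.
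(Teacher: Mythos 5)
Your proof is correct and follows essentially the same route as the paper's: both expand the $q$-Pochhammer symbols by definition and observe that the quotient $(a;q)_n/(a;q)_k$ telescopes to the product $\prod_{j=k}^{n-1}(1-aq^j)$, which is exactly $(aq^k;q)_{n-k}$ after reindexing. Your additional remarks on the edge cases $k=0$, $k=n$ and on the non-vanishing of $(a;q)_k$ needed to justify the division are sensible refinements but do not change the argument.
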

\begin{proof}
By definition, we have that
\[
(aq^k;q)_{n-k}=(1-aq^k)(1-aq^{k+1})\cdots(1-aq^{n-1}).\]
On the other hand, 
\begin{align*} \frac{(a;q)_n}{(a;q)_k}&=\frac{\prod_{i=0}^{n-1}(1-aq^{i})}{\prod_{i=0}^{k-1}(1-aq^{i})}\\
&= (1-aq^k)(1-aq^{k+1})\cdots(1-aq^{n-1}).
\end{align*}
Hence the result.
\end{proof}

\noindent We also state $q$-analogue of the Chu-Vandermonde identity [\cite{gasper2004basic}, Eq.(1.5.2)]:
\begin{equation}
 \label{Chu Vandermode identity}
\sum_{r=0}^i \frac{\left(q^{-i} ; q\right)_r(b ; q)_r}{(c ; q)_r(q ; q)_r}\left(\frac{c q^i}{b}\right)^r=\frac{(c / b ; q)_i}{(c ; q)_i}
\end{equation}
where $i$ is a non-negative integer, and $b, c$ are complex numbers that satisfy $b \neq 0$ and $c \notin \left\{q^{-1}, \ldots, q^{-(i-1)}\right\}$. Before we proceed, we recall a $q$-hypergeometric identity (Lemma 2.7, \cite{HazGor}). 

\begin{proposition} 
\label{Hazan identity}
Let $n$ be a non-negative integer. Let $t$ be an integer greater than or equal to $2n$. Then
\[\sum_{r=0}^{n} a(n,r,q)(q;q)_{t-r}= q^{n^{2}}\frac{(q;q)^{2}_{t-n}}{(q;q)_{t-2n}}. \]
\end{proposition}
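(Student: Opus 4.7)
The plan is to recognize the sum as an instance of the $q$-Chu--Vandermonde identity \eqref{Chu Vandermode identity}, after rewriting $a(n,r,q)$ explicitly and converting the various $q$-Pochhammer factors via Lemma \ref{identity for aq^k}. Both tools are already stated in the paper, so no outside machinery is required. I begin by substituting the standard count $a(n,r,q) = \Stirling{n}{r}_q^{\,2}\, q^{\binom{r}{2}}\,(-1)^r\,(q;q)_r$ (which is just $\Stirling{n}{r}_q^{\,2}\,|\GL_r(\mathbb{F}_q)|$ written in $q$-Pochhammer form) and pulling a common factor $(q;q)_t$ out of the sum via the relation $(q;q)_r(q;q)_{t-r} = (q;q)_t/\Stirling{t}{r}_q$. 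The left-hand side then takes the shape
\[
(q;q)_t \sum_{r=0}^n (-1)^r q^{\binom{r}{2}}\,\frac{\Stirling{n}{r}_q^{\,2}}{\Stirling{t}{r}_q}.
\]

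Next, a short computation based on Lemma \ref{identity for aq^k} gives the conversion formula $\Stirling{m}{r}_q = (-1)^r q^{mr-\binom{r}{2}}(q^{-m};q)_r/(q;q)_r$, valid whenever $m \geq r$. Applying this with $m=n$ (in the numerator) and $m=t$ (in the denominator) and collecting signs and $q$-powers turns the summand into
\[
\frac{(q^{-n};q)_r\,(q^{-n};q)_r}{(q^{-t};q)_r\,(q;q)_r}\;q^{(2n-t)r},
\]
which is precisely the general term of \eqref{Chu Vandermode identity} with $i = n$, $b = q^{-n}$, $c = q^{-t}$, and $cq^i/b = q^{2n-t}$. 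The hypothesis $t \geq 2n$ guarantees the required non-degeneracy $c \notin \{q^{-1},\dots,q^{-(n-1)}\}$, so Chu--Vandermonde collapses the sum to $(q^{n-t};q)_n/(q^{-t};q)_n$.

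Finally, two further uses of the same conversion rewrite this ratio in standard $(q;q)_{\cdot}$ form: the $(-1)^n$ signs and the $q^{\binom{n}{2}}$ factors cancel between numerator and denominator, the surviving $q$-exponents combine into $q^{n^2}$, and reinstating the $(q;q)_t$ pulled out in the first step yields $q^{n^2}(q;q)_{t-n}^{\,2}/(q;q)_{t-2n}$, as claimed. The main obstacle is simply bookkeeping --- tracking the $(-1)^r$ signs and the cumulative $q$-exponents ($\binom{r}{2}$, $nr$, and $tr$) through the Pochhammer conversions so that the summand matches the Chu--Vandermonde template on the nose. Once that alignment is achieved, the identity collapses in a single application of \eqref{Chu Vandermode identity}.
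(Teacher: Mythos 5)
Your proof is correct, but it is genuinely different from what the paper does: the paper does not prove Proposition~\ref{Hazan identity} at all, it simply cites Lemma~2.7 of \cite{HazGor} (which establishes the identity in a more general setting). Your argument checks out at every step: the count $a(n,r,q)=\Stirling{n}{r}_q^{2}q^{\binom{r}{2}}(-1)^r(q;q)_r$ is the standard one, the conversion $\Stirling{m}{r}_q=(-1)^r q^{mr-\binom{r}{2}}(q^{-m};q)_r/(q;q)_r$ is valid, the resulting summand $\frac{(q^{-n};q)_r(q^{-n};q)_r}{(q^{-t};q)_r(q;q)_r}q^{(2n-t)r}$ matches the Chu--Vandermonde template \eqref{Chu Vandermode identity} with $i=n$, $b=q^{-n}$, $c=q^{-t}$ exactly, and the back-conversion of $(q^{n-t};q)_n/(q^{-t};q)_n$ indeed produces $q^{n^2}(q;q)_{t-n}^2/\bigl((q;q)_{t-2n}(q;q)_t\bigr)$, so that restoring the factor $(q;q)_t$ gives the claim. (Note that the hypothesis $t\geq 2n$ is doing its real work in this last step, where you need $(q^{t-2n+1};q)_n=(q;q)_{t-n}/(q;q)_{t-2n}$ with $t-2n\geq 0$; mere non-degeneracy of $c$ would only require $t\geq n$.) What is pleasant about your route is that it makes the paper self-contained using only tools the paper already states --- identity \eqref{Chu Vandermode identity} and the Pochhammer manipulations of Lemma~\ref{identity for aq^k} --- and it closely parallels the paper's own Section~4 verification, which applies the same Chu--Vandermonde identity (there with $c=1$, $b=q^{-n}$, $i=n$) together with Landsberg's formula for $a(n,k,q)$; your rewriting of $a(n,r,q)$ is precisely that formula. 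The paper's citation buys brevity and access to a more general statement; your proof buys a complete, internally verifiable argument at the cost of some Pochhammer bookkeeping. One small caveat: the conversion formula for $\Stirling{m}{r}_q$ is really a direct term-by-term inversion of the product $(q^{-m};q)_r$ rather than a consequence of Lemma~\ref{identity for aq^k}, so you should either state it as its own short computation or prove it inline rather than attribute it to that lemma.
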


We refer the reader to Lemma 2.7 in \cite{HazGor} for a proof of the above proposition in a more general set up.

\section{Dimension of the Twisted Jacquet Module} 

Let $\pi=\pi_{\theta}$ be an irreducible cuspidal representation of $G$ corresponding to the regular character $\theta$ of $F_{2n}^{\times}$ and $\Theta_{\theta}$ be its character. In this section, we calculate the dimension of $\pi_{N,\psi_{A_k}}$, where $$A_k=\begin{bmatrix} I_k & 0\\
0 & 0 \end{bmatrix}\in \M(n,F)$$ is a rank $k$ matrix.

\begin{lemma}
Let $0 \leq r \leq n$ be an integer and $X \in \mathrm{M}(n,r, F)$. We have
$$
\Theta_{\theta}\left(\left[\begin{array}{cc}
1 & X \\
0 & 1
\end{array}\right]\right)
=(-1)^{2n-1}(q;q)_{2n-r-1}
$$
\end{lemma}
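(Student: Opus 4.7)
The plan is to apply Theorem \ref{character value of cuspidal representation (Dipendra)} directly. The matrix $g = \begin{bmatrix} I_n & X \\ 0 & I_n \end{bmatrix}$ is unipotent, so its Jordan decomposition is $g = s \cdot u$ with $s = I_{2n}$ and $u = g$. In particular the semisimple part comes from $F_{2n}^{\times}$ (it lies in $F^{\times} \subset F_{2n}^{\times}$), so the theorem's formula applies and there is no vanishing from Theorem \ref{character value of cuspidal representation (Gelfand)} to worry about.

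The next step is to extract the data $z$, $d$, and $t$ that enter the formula. The only eigenvalue of $s = I_{2n}$ is $z = 1$, which already lies in $F$, so the subfield it generates over $F$ has cardinality $q^{d} = q$, i.e.\ $d = 1$; the Galois-conjugate sum collapses to the single term $\theta(1) = 1$. For $t$, I compute the $F_{2n}$-dimension of the kernel of
\[
g - z\cdot I = \begin{bmatrix} 0 & X \\ 0 & 0 \end{bmatrix};
\]
a vector $(v_1, v_2)^{T}$ lies in the kernel iff $X v_2 = 0$, and since extending scalars to $F_{2n}$ does not change the rank of $X$, the kernel has dimension $n + (n - r) = 2n - r$. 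Hence $t = 2n - r$.

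Substituting these values into the formula of Theorem \ref{character value of cuspidal representation (Dipendra)} gives
\[
\Theta_{\theta}(g) = (-1)^{2n-1}\,\theta(1)\,(1-q)(1-q^{2}) \cdots (1 - q^{t-1}),
\]
and recognizing $(1-q)(1-q^{2}) \cdots (1 - q^{2n-r-1}) = (q;q)_{2n-r-1}$ (from the definition of the $q$-Pochhammer symbol recalled above Lemma \ref{identity for aq^k}) yields exactly the claimed value.

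There is essentially no obstacle here: the only thing that requires a moment's care is the computation of $t$ over $F_{2n}$ rather than over $F$, but invariance of rank under field extension makes this immediate. The result is a one-step application of the Gel'fand/Prasad character formula to a unipotent element.
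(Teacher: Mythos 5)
Your proof is correct and follows exactly the route the paper intends: the paper's own proof is a one-line appeal to Theorem \ref{character value of cuspidal representation (Dipendra)} plus the $q$-Pochhammer rewriting, and you have simply supplied the details (Jordan decomposition with $s=I_{2n}$, $z=1$, $d=1$, $t=2n-r$ via rank invariance under field extension). Nothing is missing; the identifications of $z$, $d$, and $t$ are all accurate.
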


\begin{proof} The proof follows from Theorem~\ref{character value of cuspidal representation (Dipendra)} above and rewriting the character values using the $q$-Pochhammer symbol.  
\end{proof}

\begin{lemma} \label{simplification}
We have 
\[ \sum_{n \in N}\Theta_{\theta}(n)\overline{\psi_{A_k}(n)}= (-1)^{2n-1}\sum_{r=0}^{n}(q;q)_{2n-r-1}(f_{n,r,k}^{0}-f_{n,r,k}^{1}).\]
\end{lemma}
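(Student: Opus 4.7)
The plan is to identify $N$ with $\M(n,F)$ and stratify the sum by the rank of the matrix $X$ representing an element of $N$. For each fixed rank $r$, the previous lemma tells us that $\Theta_{\theta}$ evaluated on the corresponding unipotent element is the constant $(-1)^{2n-1}(q;q)_{2n-r-1}$, so it pulls out of the inner sum, leaving us to evaluate
\[
S_r \;:=\; \sum_{X \in \M(n,r,F)} \overline{\psi_{0}(\tr(A_{k}X))}.
\]

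The next step is to further stratify by the value $\alpha = \tr(A_{k}X) \in F$, which converts $S_r$ into $\sum_{\alpha \in F} f_{n,r,k}^{\alpha}\,\overline{\psi_{0}(\alpha)}$. To simplify this, I would observe that for each non-zero $\alpha$ the map $X \mapsto \alpha^{-1}X$ is a rank-preserving bijection from $Y_{n,r,k}^{\alpha}$ onto $Y_{n,r,k}^{1}$, since $\tr(A_k(\alpha^{-1}X)) = \alpha^{-1}\tr(A_kX) = 1$. Hence $f_{n,r,k}^{\alpha} = f_{n,r,k}^{1}$ for every $\alpha \neq 0$, and the inner sum collapses to
\[
S_r = f_{n,r,k}^{0} + f_{n,r,k}^{1}\sum_{\alpha \in F^{\times}}\overline{\psi_{0}(\alpha)}.
\]
Because $\psi_0$ is a non-trivial additive character of $F$, orthogonality gives $\sum_{\alpha \in F}\psi_{0}(\alpha) = 0$, so $\sum_{\alpha \in F^{\times}}\overline{\psi_{0}(\alpha)} = -1$, and therefore $S_r = f_{n,r,k}^{0} - f_{n,r,k}^{1}$.

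Reassembling the outer sum over $r$ yields
\[
\sum_{n \in N}\Theta_{\theta}(n)\overline{\psi_{A_{k}}(n)}
= (-1)^{2n-1}\sum_{r=0}^{n}(q;q)_{2n-r-1}\bigl(f_{n,r,k}^{0}-f_{n,r,k}^{1}\bigr),
\]
as claimed. The argument is essentially a bookkeeping exercise; the only step requiring a small observation is the scaling bijection that shows $f_{n,r,k}^{\alpha}$ is independent of $\alpha \in F^{\times}$. The character-theoretic content is already packaged in the preceding lemma, and there are no convergence or identity manipulations needed here, so I do not anticipate any real obstacle.
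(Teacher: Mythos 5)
Your proof is correct and follows essentially the same route as the paper's: stratify $N \simeq \M(n,F)$ by rank, pull out the constant character value from the preceding lemma, split by the trace value, and use non-triviality of $\psi_0$ to collapse $\sum_{\alpha \in F^{\times}}\overline{\psi_0(\alpha)}$ to $-1$. In fact you go slightly further than the paper, which merely asserts that $f_{n,r,k}^{\beta}$ is independent of $\beta \in F^{\times}$, whereas you justify it with the scaling bijection $X \mapsto \alpha^{-1}X$.
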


\begin{proof}
Consider
\begin{align*}
\sum_{n \in N}\Theta_{\theta}(n)\overline{\psi_{A_k}(n)}&= \sum_{X \in \M(n,F)} \Theta_{\theta} \left(\left[\begin{array}{cc}
1 & X \\
0 & 1
\end{array}\right]\right) \overline{\psi_{A_k}(X)}\\
&= \sum_{r=0}^{n}\sum_{X \in \M(n,r,F)} \Theta_{\theta} \left(\left[\begin{array}{cc}
1 & X \\
0 & 1
\end{array}\right]\right) \overline{\psi_{A_k}(X)}\\
&= (-1)^{2n-1}\sum_{r=0}^{n}(q;q)_{2n-r-1} \sum_{X \in \M(n,r,F)} \overline{\psi_{A_k}(X)}\\
&=(-1)^{2n-1}\sum_{r=0}^{n}(q;q)_{2n-r-1} \left \{f_{n,r,k}^{0}\psi_0(0)+\sum_{\beta \in F^{\times}}f_{n,r,k}^{\beta} \overline{\psi_0(\beta)} \right \}
\end{align*}
Since $f_{n,r,k}^{\beta}$ is same for all $\beta \in F^{\times}$, we obtain that 
\[\sum_{n \in N}\Theta_{\theta}(n)\overline{\psi_{A_k}(n)}=  (-1)^{2n-1}\sum_{r=0}^{n}(q;q)_{2n-r-1} \left \{ f_{n,r,k}^{0}-f_{n,r,k}^{1} \right\}.\]
\end{proof}

We now prove a $q$-hypergeometric identity that we need to compute the dimension of the twisted Jacquet module $\pi_{N,\psi_{A_k}}$.

\begin{lemma} \label{hyp identity}
We have 
\[\sum_{j=0}^{k} (-1)^j q^{\tbinom{j}{2}} \frac{(q;q)_{k}(a;q)_{k-j}}{(q;q)_j(q;q)_{k-j}}= (-1)^k q^{\tbinom{k}{2}}a^k.\]
\end{lemma}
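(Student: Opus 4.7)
The plan is to reduce the identity to the $q$-binomial theorem applied twice, together with the symmetry of $q$-binomial coefficients. Recall the $q$-binomial theorem: for any non-negative integer $m$ and any $z \in \mathbb{C}$,
\[(z;q)_m = \sum_{i=0}^{m}(-1)^i q^{\binom{i}{2}} \Stirling{m}{i}_q z^i.\]

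First I would expand the factor $(a;q)_{k-j}$ on the left-hand side by applying this theorem with $m = k-j$ and $z = a$, and then interchange the resulting double sum. The pair of Gaussian binomial coefficients that appears is $\Stirling{k}{j}_q \Stirling{k-j}{i}_q$, and I would replace it by $\Stirling{k}{i}_q \Stirling{k-i}{j}_q$ using the trinomial-revision identity, which is immediate from the definition
\[\Stirling{k}{j}_q \Stirling{k-j}{i}_q = \frac{(q;q)_k}{(q;q)_i(q;q)_j(q;q)_{k-i-j}} = \Stirling{k}{i}_q \Stirling{k-i}{j}_q.\]
After pulling the $i$-dependent factors out, the left-hand side takes the form
\[\sum_{i=0}^{k}(-1)^i q^{\binom{i}{2}} a^i \Stirling{k}{i}_q \sum_{j=0}^{k-i}(-1)^j q^{\binom{j}{2}} \Stirling{k-i}{j}_q.\]

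Next I would recognise the inner sum as a second application of the $q$-binomial theorem, this time at $z=1$, giving $(1;q)_{k-i}$. Since $(1;q)_{m}$ contains the factor $(1-1)=0$ for every $m\geq 1$, the inner sum vanishes unless $i=k$, in which case it equals $1$. Consequently only the term $i=k$ survives in the outer sum, and it contributes exactly $(-1)^k q^{\binom{k}{2}} a^k$, which is the right-hand side.

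I do not anticipate a real obstacle: everything is driven by the $q$-binomial theorem and the Pochhammer definition of $\Stirling{\cdot}{\cdot}_q$. The only point requiring a little care is the swap of summation order, where the upper limit on $j$ becomes $k-i$ because $\Stirling{k-j}{i}_q$ vanishes for $j>k-i$; keeping track of this is what guarantees that the inner sum is genuinely $(1;q)_{k-i}$ rather than a truncation of it.
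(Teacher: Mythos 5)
Your proof is correct, but it takes a genuinely different route from the paper. The paper works with formal power series in $x$: it writes the left-hand side as $(q;q)_k$ times the coefficient of $x^k$ in the product $(x;q)_{\infty}\cdot\dfrac{(ax;q)_{\infty}}{(x;q)_{\infty}}=(ax;q)_{\infty}$, invoking Euler's expansion of $(x;q)_{\infty}$ and the (infinite) $q$-binomial series, and then reads off the coefficient of $x^k$ in $(ax;q)_{\infty}$ from Euler's identity again. You instead stay entirely in the finite world: you expand $(a;q)_{k-j}$ by Gauss's polynomial identity
\[(z;q)_m=\sum_{i=0}^{m}(-1)^i q^{\tbinom{i}{2}}\Stirling{m}{i}_q z^i,\]
interchange the double sum, apply trinomial revision $\Stirling{k}{j}_q\Stirling{k-j}{i}_q=\Stirling{k}{i}_q\Stirling{k-i}{j}_q$, and then collapse the inner sum to $(1;q)_{k-i}$, which vanishes for $i<k$ because of the factor $(1-1)$; only $i=k$ survives and gives the right-hand side. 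All the individual steps check out, including the bookkeeping of the upper limit $j\leq k-i$ after the interchange, which is exactly what makes the inner sum the full polynomial $(1;q)_{k-i}$ and not a truncation. What each approach buys: the paper's argument is shorter once the two classical infinite-series identities are granted, and it fits the $q$-hypergeometric toolkit used elsewhere in the paper; yours is more elementary and self-contained, requiring only the finite Gauss formula and a symmetry of Gaussian binomials, with no appeal to formal infinite products, and it makes transparent the combinatorial mechanism (a telescoping inclusion--exclusion) behind the identity.
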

\begin{proof}
We can write
\begin{equation} \label{eq 1}
\sum_{j=0}^{k} (-1)^j q^{\tbinom{j}{2}} \frac{(q;q)_{k}(a;q)_{k-j}}{(q;q)_j(q;q)_{k-j}}= (q;q)_k \sum_{j=0}^{k} \frac{(-1)^j q^{\tbinom{j}{2}}}{(q;q)_j} \frac{(a;q)_{k-j}}{(q;q)_{k-j}}.
\end{equation}

We recall two classical $q$-identities.
\begin{equation} \label{Euler equation}
(x;q)_{\infty} = \sum_{n=0}^{\infty} \frac{(-1)^n q^{\tbinom{n}{2}}x^n}{(q;q)_n},
\end{equation}

\begin{equation}\label{q-Binomial identity}
\frac{(bx;q)_{\infty}}{(x;q)_{\infty}}=\sum_{n=0}^{\infty} \frac{(b;q)_n}{(q;q)_n}x^n.
\end{equation}
It is enough to consider these two series as formal series in $x$ for this case. The identity \eqref{Euler equation} is due to Euler, whereas the identity \eqref{q-Binomial identity} is called as the `$q$-Binomial identity'.
Plugging $b=a$ in Eq \eqref{q-Binomial identity}, we obtain that 
\[ \sum_{i} \frac{(a;q)_i}{(q;q)_i}x^i=\frac{(ax;q)_{\infty}}{(x;q)_{\infty}}.\]
Hence, Eq \eqref{eq 1} can be written as $(q;q)_k$ times the coefficient of $x^k$ in the following product
\[(x;q)_{\infty}\frac{(ax;q)_{\infty}}{(x;q)_{\infty}}=(ax;q)_{\infty}.\]
We use Eq \eqref{Euler equation} to conclude that the coefficient of $x^k$ equals 
\[\frac{(-1)^k q^{\tbinom{k}{2}} a^k}{(q;q)_k}.\]
Thus, it follows that Eq \eqref{eq 1} becomes 
\[(q;q)_k  \frac{(-1)^k q^{\tbinom{k}{2}} a^k}{(q;q)_k}=(-1)^kq^{\tbinom{k}{2}} a^k.\] 
\end{proof}

\begin{theorem}
Let $\pi=\pi_{\theta}$ be an irreducible cuspidal representation of $\GL(2n,F)$ associated to a regular character of $F_{2n}^{\times}$. For $0 \leq k \leq n$, the dimension of $\pi_{N,\psi_{A_k}}$ is given by 
\[ \dim_{\mathbb{C}}(\pi_{N,\psi_{A_k}})=(-1)^{n-1}(q;q)_{n-1}   q^{\tbinom{k}{2}} (q^k-1)(q^{k+1}-1) \cdots(q^{n-1}-1).\]
\end{theorem}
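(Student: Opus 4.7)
The plan is to start from the dimension formula
\[\dim_{\mathbb{C}}(\pi_{N,\psi_{A_k}}) = \frac{1}{|N|}\sum_{n \in N}\Theta_{\theta}(n)\overline{\psi_{A_k}(n)},\]
with $|N|=q^{n^{2}}$, and substitute in Lemma~\ref{simplification} together with the expression for $f_{n,r,k}^{0}-f_{n,r,k}^{1}$ in Equation~\eqref{f(n,r,k)^0-f(n,r,k)^1}. This produces a double sum indexed by $r$ and $s$ (with $r-k \le s \le r$). I would then reindex by $j=r-s$, so that $0\le j\le k$ and $0\le s\le n-k$ (using $a(n-k,s,q)=0$ for $s>n-k$), and interchange the order of summation to get
\[\dim_{\mathbb{C}}(\pi_{N,\psi_{A_k}})=\frac{-1}{q^{n^{2}}}\sum_{s=0}^{n-k} q^{ks}\,a(n-k,s,q)\sum_{j=0}^{k}(-1)^{j}q^{\binom{j}{2}}\Stirling{k}{j}_{q}(q;q)_{2n-s-j-1}.\]

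The inner sum over $j$ is then massaged to fit Lemma~\ref{hyp identity}. Using Lemma~\ref{identity for aq^k} I would factor
\[(q;q)_{2n-s-j-1}=(q;q)_{2n-s-k-1}\,(q^{2n-s-k};q)_{k-j},\]
pulling $(q;q)_{2n-s-k-1}$ out of the $j$-sum. What remains inside has exactly the shape of Lemma~\ref{hyp identity} with $a=q^{2n-s-k}$, so it collapses to $(-1)^{k}q^{\binom{k}{2}}q^{k(2n-s-k)}$. The key cancellation now appears: the $q^{ks}$ from the outer sum combines with the $q^{-ks}$ hidden inside $q^{k(2n-s-k)}$ to kill all dependence of the exponent on $s$, leaving the $s$-dependence only in the factor $a(n-k,s,q)(q;q)_{2n-s-k-1}$.

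At that point the outer sum becomes $\sum_{s=0}^{n-k}a(n-k,s,q)(q;q)_{(2n-k-1)-s}$, which is precisely the left-hand side of Proposition~\ref{Hazan identity} with parameters $n\rightsquigarrow n-k$ and $t=2n-k-1$ (so the hypothesis $t\ge 2(n-k)$ holds whenever $k\ge 1$; the case $k=0$ is handled separately since $\pi_{N}=0$ for cuspidal $\pi$). Applying the proposition gives $q^{(n-k)^{2}}(q;q)_{n-1}^{2}/(q;q)_{k-1}$. Gathering the powers of $q$, one checks $k(2n-k)+(n-k)^{2}-n^{2}=0$, so all $q$-powers outside $q^{\binom{k}{2}}$ cancel, yielding
\[\dim_{\mathbb{C}}(\pi_{N,\psi_{A_k}})=(-1)^{k+1}q^{\binom{k}{2}}\frac{(q;q)_{n-1}^{2}}{(q;q)_{k-1}}.\]
A final step converts $(q;q)_{n-1}/(q;q)_{k-1}=\prod_{i=k}^{n-1}(1-q^{i})=(-1)^{n-k}\prod_{i=k}^{n-1}(q^{i}-1)$, and the sign $(-1)^{k+1}(-1)^{n-k}=(-1)^{n-1}$ produces the stated formula.

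The main obstacle is the algebraic repackaging of $(q;q)_{2n-s-j-1}$ so that the $j$-sum matches Lemma~\ref{hyp identity} and simultaneously the surviving $s$-sum matches Proposition~\ref{Hazan identity}; once both identities are in position, everything else is bookkeeping on signs and $q$-powers, and the nontrivial miracle is the exact cancellation $q^{ks}\cdot q^{-ks}=1$ produced by the value $a=q^{2n-s-k}$ chosen when invoking Lemma~\ref{hyp identity}.
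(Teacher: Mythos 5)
Your proof is correct and follows the paper's argument essentially step for step: the same interchange of summation (the paper phrases it as extracting the coefficient of $a(n-k,r,q)$), the same factorization of $(q;q)_{2n-s-j-1}$ via Lemma~\ref{identity for aq^k} so that Lemma~\ref{hyp identity} applies with $a=q^{2n-s-k}$, and the same application of Proposition~\ref{Hazan identity} with $t=2n-k-1$, followed by identical sign and $q$-power bookkeeping. Your explicit separate treatment of $k=0$ (where the hypothesis $t\geq 2(n-k)$ fails but the result holds trivially since $\pi_N=0$ for cuspidal $\pi$) is a small refinement that the paper glosses over.
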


\begin{proof}
It is easy to see that 
\[\dim_{\mathbb{C}}(\pi_{N,\psi_{A_k}})= \frac{1}{|N|}\sum_{n \in N}\Theta_{\theta}(n)\overline{\psi_{A_k}(n)}.\]
From Lemma \ref{simplification}, it follows that 
\[\dim_{\mathbb{C}}(\pi_{N,\psi_{A_k}})= \frac{(-1)^{2n-1}}{q^{n^2}} \sum_{r=0}^{n} (q,q)_{2n-r-1}(f_{n,r,k}^{0}-f_{n,r,k}^{1}).\]
From Equation \eqref{f(n,r,k)^0-f(n,r,k)^1}, we obtain that
\[(-1)^{2n-1}q^{n^2}\dim_{\mathbb{C}}(\pi_{N,\psi_{A_k}})=  \sum_{r=0}^{n} (q,q)_{2n-r-1} \left \{ \sum_{s=r-k}^{r}(-1)^{r-s}q^{\tbinom{r-s}{2} +ks} \,\Stirling{k}{r-s}_q \,  a(n-k,s,q) \right \}.\]
Then the coefficient of $a(n-k,r,q)$ is given by 
\[q^{kr} (q;q)_{2n-r-k-1} \sum_{j=0}^{k} \frac{ (-1)^j q^{\tbinom{j}{2}}(q;q)_k}{(q;q)_{j}} \frac{(q^{2n-r-k};q)_{k-j}}{(q;q)_{k-j}}. \]
Plugging $a=q^{2n-r-k}$ in Lemma \ref{hyp identity}, the coefficient becomes 
\[q^{kr} (q;q)_{2n-r-k-1}(-1)^kq^{2nk-kr-k^2}q^{\tbinom{k}{2}}.\]
Since $a(n-k,r,q)=0$ for $r> n-k$, it follows that 
\[(-1)^{2n-1+k}\dim_{\mathbb{C}}(\pi_{N,\psi_{A_k}})=q^{-n^2+2nk-\tbinom{k+1}{2}}\sum_{r=0}^{n-k}(q;q)_{2n-r-k-1}a(n-k,r,q).\]
Plugging $t=2n-k-1 \geq 2(n-k)$ in Lemma \ref{Hazan identity}, we obtain that 
\[\dim_{\mathbb{C}}(\pi_{N,\psi_{A_k}})= (-1)^{2n-1+k} q^{-n^2+2nk-\tbinom{k+1}{2}}q^{(n-k)^2} \frac{(q;q)^2_{n-1}}{(q;q)_{k-1}}.\]
Hence, we conclude that 
\[
 \dim_{\mathbb{C}}(\pi_{N,\psi_{A_k}}) =(-1)^{n-1}(q;q)_{n-1} q^{\tbinom{k}{2}}(q^{k}-1)(q^{k+1}-1)\cdots(q^{n-1}-1).\]
\end{proof}

\section{Verification}

Let $0 \leq k \leq n$. In this section, we verify that the dimension of the twisted Jacquet module $\pi_{N,\psi_{A_k}}$ is indeed correct.\\

 For $0\leq k \leq n$, let $\pi[k]$ be the sum of all characters of $N$ inside $\pi$ which lie in the orbit of the character $\psi_{A_{k}}$ under the action of $\GL(n,F) \times \GL(n,F)$. Then, 
\[\dim_{\mathbb{C}}(\pi|_N)= \sum_{k=0}^{n}\dim_{\mathbb{C}}(\pi[k]).\]
Thus, it is enough to show that 
\begin{equation} \label{split of pi|_N}
\sum_{k=0}^{n}a(n,k,q)\dim_{\mathbb{C}}(\pi_{N,\psi_{A_k}})=\dim_{\mathbb{C}}(\pi|_N).
\end{equation}
We have the following formula for $a(n,k,q)$ by Landsberg \cite{landsberg1893ueber}.
\[a(n,k,q)=\frac{(-1)^k(q^{-n};q)^2_k q^{(2nk)-{k \choose 2}}}{(q;q)_k}.\]
Then the $k$-the summand in \eqref{split of pi|_N} can be expressed as follows.
\[ \small
(-1)^{n-1}(q;q)_{n-1}a(n,k,q)q^{{k \choose 2}}(-1)^{n-k}(1-q^k)(1-q^{k+1})\cdots(1-q^{n-1})\]
\[\hspace{2.2 cm}=\frac{(-1)^{2n-1}(q;q)_{n-1}q^{{2nk}}(q^{-n};q)^2_k(q^{k};q)_{n-k}}{(q;q)_k}.\]
Then by applying Lemma \ref{identity for aq^k} for $a=1$, the $k$-th summand is given by:
\[\frac{(-1)^{2n-1}(q;q)_{n-1}q^{{2nk}}(q^{-n};q)^2_k(1;q)_n}{(q;q)_k(1;q)_k}.\]
Substituting $c=1, b=q^{-n}$ and $i=n$ in Eq. \eqref{Chu Vandermode identity}, we conclude that
\begin{align*}
\sum_{k=0}^{n}a(n,k,q)\dim_{\mathbb{C}}(\pi_{N,\psi_{A_k}})&= (-1)^{2n-1}(q;q)_{n-1}(1;q)_n \sum_{k=0}^{n} \frac{q^{{2nk}}(q^{-n};q)^2_k}{(q;q)_k(1;q)_k}\\
&= (-1)^{2n-1}(1;q)_n \frac{(q^n;q)_n}{(1;q)_n}\\
&=(-1)^{2n-1}(q;q)_{n-1}(q^n;q)_n\\
&= (-1)^{2n-1}(q;q)_{2n-1}\\
&=\dim_{\mathbb{C}}(\pi|_N)
\end{align*}

\bibliographystyle{amsplain}
\bibliography{mybib}
\end{document}